\title[]{Extremality of Rational Tails Boundary Strata in $\Moduli_{g,n}$}
\author{Vance Blankers}
\address{Department of Mathematics, Northeastern University, Boston, Massachusetts 02115-5005}
\email{\href{mailto:v.blankers@northeastern.edu }{v.blankers@northeastern.edu }}
\thanks{The author was supported by the NSF grant DMS--1645877.}
\begin{document}
\allowdisplaybreaks
\setcounter{tocdepth}{1}

\maketitle

\begin{abstract} 
We review and develop some techniques used to investigate the effective cones of higher codimension classes. Our results show that a large collection of boundary strata of rational tails type are extremal in their effective cones on $\Moduli_{g,n}$ and provide evidence for the conjecture that all boundary strata of $\Moduli_{g,n}$ are extremal. As a corollary, we show that all boundary strata are extremal in genus zero.
\end{abstract}

\section{Introduction}
\label{sec:intro}

The cone of effective divisors on a projective variety $X$ dictates its birational geometry. When $X$ is a moduli space, birational models of $X$ often have new modular interpretations and useful connections to each other. For this reason and others,
the structure of the cone of effective divisors of $\Moduli_{g,n}$ has attracted a great deal of attention; see for example
\cite{chencoskun2014,castravettevelev,opie2016,mullane2017,mullane2020}.
More generally, there has been interest in probing the finer aspects of the birational geometry of moduli spaces by studying the cones of effective higher codimension cycles, e.g., \cite{mullane2017higher,chensurvey,mullane2019,blankers17}.

Cones of higher codimension cycles are significantly more difficult to understand than cones of divisors, in part because their positivity properties do not behave as well as those of divisors; for example, nef cycles may fail to be pseudoeffective in higher codimension \cite{delv11}. Moreover, some of the tools used to study the cone of effective divisors are not known to generalize (or are known to not generalize). For more on positivity of higher codimension cycles, see \cite{fulgerlehmann2014}.

There are two complementary types of results in studying extremality of cycles. The first is to show whether or not a given cone is rational polyhedral, or more specifically, whether there are infinitely-many extremal rays for the cone. In codimension one, this focus stems largely from interest in running the minimal model program. The second is to study a particular family of classes of more general interest that live in a family of cones and to establish which classes in the family are extremal in their respective cones.

This paper is of the latter type: we extend the work of \cite{chencoskun2015} and show that a large collection of boundary strata of rational tails type span extremal rays of the cones of effective classes of $\Moduli_{g,n}$. In order to do so, we first review some (pseudo)effective techniques for determining when (pseudo)effective classes are rigid and extremal in a (pseudo)effective cone. Our results present evidence for the following conjecture.

\begin{conjecture}
\label{conj:extremalboundary}
All boundary strata in $\Moduli_{g,n}$ are extremal.
\end{conjecture}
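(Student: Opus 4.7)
The strategy is an induction on the number of edges of $\Gamma$ combined with a product-decomposition argument leveraging the gluing morphism $\xi_\Gamma \colon \prod_v \Moduli_{g_v, n_v} \to \Moduli_{g,n}$, whose image (up to the action of $\Aut(\Gamma)$) is the boundary stratum $\delta_\Gamma$. Given any decomposition $[\delta_\Gamma] = \sum_i c_i [E_i]$ into effective classes of the same codimension with $c_i > 0$, the goal is to show each $E_i$ is a non-negative multiple of $[\delta_\Gamma]$.

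First I would produce a collection of auxiliary positive classes (moving-curve classes, or nef cycles of complementary dimension) that annihilate $[\delta_\Gamma]$ numerically but pair non-negatively with every pseudoeffective class of the same codimension. Any such auxiliary class must then annihilate each $E_i$ in the decomposition, and if enough of these can be produced, the supports of all $E_i$ are forced to lie inside $\delta_\Gamma$. Candidates arise by pushing forward moving families from the factors $\Moduli_{g_v, n_v}$ along $\xi_\Gamma$ and computing their pairing against $[\delta_\Gamma]$ via the excess intersection formula for the self-intersection of boundary classes.

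Once the support of each $E_i$ is known to lie in $\delta_\Gamma$, a second step reduces the extremality problem to the factors via pullback along $\xi_\Gamma$: each $E_i$ pulls back to a class on $\prod_v \Moduli_{g_v, n_v}$ supported on its own boundary strata, and a vertex-by-vertex argument identifies this pullback with a multiple of the fundamental class of the product. The $g_v = 0$ vertices are handled by the rational tails case established in this paper, while $g_v \geq 1$ vertices contribute only fundamental classes because both $E_i$ and $[\delta_\Gamma]$ are already in top codimension along each factor. Pushing forward and tracking the $\Aut(\Gamma)$-multiplicities then yields the desired proportionality and hence extremality.

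The principal obstacle is producing enough auxiliary positive classes in the first step when $\Gamma$ has multiple vertices of positive genus. The rational tails case succeeds because $\Moduli_{0, n_v}$ admits an ample and well-understood supply of moving curves whose intersection behavior with boundary classes is computable; no comparable supply is known for $\Moduli_{g_v, n_v}$ when $g_v \geq 1$. Without further input into the positivity theory of higher-codimension cycles on positive-genus moduli spaces, it seems difficult to rule out effective summands $E_i$ whose support escapes $\delta_\Gamma$ into unrelated boundary strata of the ambient $\Moduli_{g,n}$. I expect this to be the essential difficulty in establishing Conjecture~\ref{conj:extremalboundary} in full generality.
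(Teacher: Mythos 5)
The statement you are addressing is Conjecture~\ref{conj:extremalboundary}, which the paper does not prove: it only establishes special cases (rational tails strata, Theorem~\ref{thm:rattailextreme}; dangling rational tails with moduli and unmarked elliptic tails, Propositions~\ref{prop:danglingratmoduli} and~\ref{prop:danglingelliptic}) as evidence. Your proposal is likewise not a proof, and you say so yourself; but beyond being incomplete, its central mechanism rests on a tool that is known not to exist in this setting. Step one asks for nef or moving classes of complementary dimension that pair non-negatively against every pseudoeffective class of the given codimension and vanish against $[\Delta_\Gamma]$, in such a way that vanishing forces the support of each $E_i$ into $\Delta_\Gamma$. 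This is exactly the mechanism of Lemma~\ref{lem:divisoronly} (a moving curve meeting a divisor negatively), and the paper states explicitly that this criterion has no analogue in higher codimension; worse, in codimension $k>1$ the spaces $N_d(X)$ and $N^k(X)$ are not dual, and nef cycles can fail to be pseudoeffective \cite{delv11}, so ``pairs non-negatively with every pseudoeffective class'' is not a property one can arrange by exhibiting moving families, even on $\Moduli_{0,n}$ factors. Note also that numerical vanishing against your auxiliary classes would at best constrain the classes $[E_i]$, not their supports, unless you prove a much stronger statement about the locus where such pairings can vanish --- which is precisely the missing content.

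By contrast, the techniques that actually succeed in the paper avoid such pairings entirely: one forces the summands $E_i$ into a known locus either via the index bound of Proposition~\ref{prop:indexbound} applied to an explicit contraction whose exceptional locus is a boundary divisor containing the stratum (Hassett reductions $r_\calA$, or the first divisorial contraction $\varepsilon$), or via pushforward along forgetful morphisms together with induction on the number of tails. The obstruction to the full conjecture identified in Remarks~\ref{rem:limitratdangle} and~\ref{rem:limitellipticdangle} is the absence of suitable contractions (and the vanishing of forgetful pushforwards) for strata whose positive-genus components carry marks --- a related but different bottleneck from the one you name. Your second step is also heavier than needed: once an effective cycle of the same dimension as $\Delta_\Gamma$ is known to be supported inside the irreducible variety $\Delta_\Gamma$, it is automatically a multiple of $\Delta_\Gamma$; no pullback along $\xi_\Gamma$ or $\Aut(\Gamma)$ bookkeeping is required (and for non-compact-type strata that gluing map is not an isomorphism onto its image, which the paper deliberately sidesteps, cf. Remark~\ref{rem:compacttype}). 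So the genuine gap is concentrated entirely in step one, and as stated it runs against, rather than around, the known failures of higher-codimension positivity.
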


The paper is organized as follows: in Section \ref{sec:cones}, we review properties of effective cycles and cones and prove two important lemmas. In Section \ref{sec:curves} we cover basic information about $\Moduli_{g,n}$ and Hassett spaces, and lay the groundwork for the main results. Section \ref{sec:rattail} contains the main inductive results and some of their immediate applications.

\section{Background on Effective Cones}
\label{sec:cones}

In this section we review some standard definitions and results about effective cones of cycles on a variety and present some criteria for establishing extremality of effective cycles. Throughout, we assume all varieties are defined over $\mathbb{C}$ and all coefficients are taken to be $\mathbb{R}$-valued.

A \emphbf{cycle} on a complete projective variety $X$ is a finite formal sum of subvarieties of $X$; if all subvarieties in the sum are of dimensional $d$ (resp. codimension $k$), the cycle is \emphbf{$d$-dimensional} (resp. \emphbf{$k$-codimensional}). Two $d$-dimensional cycles $Z$ and $Z'$ on $X$ are \emphbf{numerically equivalent} if, for any polynomial $P$ of weight $d$ in Chern classes of vector bundles on $X$,
\begin{align*}
\int_X Z\cap P = \int_X Z' \cap P,
\end{align*}
where $\cap$ is the cap product (see \cite[Chapter 19]{fultoninttheorybook}). If $X$ is non-singular, numerical equivalence is equivalent to requiring the intersection product
\begin{align*}
\int_X Z \cdot V = \int_X Z' \cdot V
\end{align*}
for all subvarieties $V\subset X$ of codimension-$d$. Although $\Moduli_{g,n}$ -- the focus of this paper -- is not non-singular, the compatibility still holds for moduli spaces of curves by \cite{edidin1992}, as $\Moduli_{g,n}$ is $\mathbb{Q}$-factorial.

We denote by $[Z]$ the numerical equivalence class of $Z$ in $X$ and write $[Z] = [Z']$ if $Z$ and $Z'$ are numerically equivalent. Let $N_d(X)$ be the (finite-dimensional) $\mathbb{R}$-vector space of cycles of dimension $d$ modulo numerical equivalence, and let $N^k(X)$ be the vector space for cycles of codimension $k$. We caution that these spaces are in general not dual when $(d,k) \neq (1,1)$, but they are dual when $X$ and $Y$ are smooth or $\mathbb{Q}$-factorial (which is the focus throughout). The decision regarding which notation to use is based on whether it is more convenient to note the dimension or codimension of a given class.

A cycle is \emphbf{effective} if all of the coefficients in its sum are non-negative or if it is numerically equivalent to such a sum. The sum of two effective dimension-$d$ (resp. codimension-$k$) classes is again effective, as is any $\mathbb{R}_+$-multiple of the same, which gives a natural convex cone structure on the set of effective classes of dimension $d$ (resp. codimension $k$) inside $N_d(X)$ (resp. $N^k(X)$), called the \emphbf{effective cone} and denoted $\text{Eff}_d(X)$ (resp. $\text{Eff}^k(X)$).

\begin{definition}\label{def:effectivedecomp}
Let $X$ be a projective variety and $\alpha \in \text{Eff}_d(X)$. An \emphbf{effective decomposition of $\alpha$} is an equality
\begin{align*}
\alpha = \sum_{s=1}^{r} a_s [E_s] \in N_d(X),
\end{align*}
with each $a_s > 0$ and $E_s$ irreducible subvarieties in $X$ of dimension $d$.
\end{definition}

\begin{definition}\label{def:extremalrigid}
A class $\alpha\in\text{Eff}_d(X)$ is \emphbf{extremal} if for any effective decomposition of $\alpha$ all classes in the decomposition are proportional to $\alpha$; the class $\alpha$ is \emphbf{rigid} if any effective cycle with class $m\alpha$ is supported on the support of $\alpha$.
\end{definition}

There are numerical techniques to determine whether divisors are extremal; in particular, the following criterion is a powerful tool.

\begin{lemma}[{{\cite[Lemma 4.1]{chencoskun2014}}}]\label{lem:divisoronly}
Let $D$ be an irreducible effective divisor in a projective variety $X$, and suppose that $\mathcal{C}$ is a moving curve in $D$ satisfying $\DD [D]\cdot[\mathcal{C}] < 0$. Then $[D]$ is rigid and extremal in $X$. \hfill $\square$
\end{lemma}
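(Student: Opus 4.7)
The plan is to take an arbitrary effective decomposition $[D] = \sum_{i=1}^r a_i [D_i]$ with $a_i > 0$ and $D_i$ irreducible effective divisors, and show each $[D_i]$ is proportional to $[D]$. The key numerical input is the moving-curve hypothesis: for any irreducible effective divisor $D_i$ not equal to $D$ as a subvariety, $[D_i]\cdot\mathcal{C} \geq 0$, since $D_i \cap D$ is then a proper subvariety of $D$ and a general member of the family sweeping out $D$ meets $D_i$ properly. A negative intersection with $\mathcal{C}$ can therefore only come from those $D_i$ that coincide with $D$ itself.

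I would then partition the sum according to whether $D_i = D$ as subvarieties. Setting $a = \sum_{D_i = D} a_i$ and $E = \sum_{D_i \neq D} a_i [D_i]$, the decomposition reads $(1-a)[D] = E$. Intersecting with $\mathcal{C}$ gives $(1-a)([D]\cdot\mathcal{C}) = E\cdot\mathcal{C} \geq 0$, and since $[D]\cdot\mathcal{C} < 0$ we must have $a \geq 1$. To rule out $a > 1$, intersect $(1-a)[D] = E$ with $H^{\dim X - 1}$ for an ample class $H$: the left side is strictly negative (as $[D]\cdot H^{\dim X - 1} > 0$), while the right side is a non-negative combination of positive $H$-degrees, a contradiction. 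Hence $a = 1$ and $E = 0$. Intersecting $\sum_{D_i \neq D} a_i [D_i] = 0$ with $H^{\dim X - 1}$ similarly forces this sum to be empty, since each irreducible effective $D_i$ has $[D_i]\cdot H^{\dim X - 1} > 0$. Thus every $D_i$ in the decomposition equals $D$, so $[D_i] = [D]$, and extremality follows.

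The key point to handle carefully is the distinction between equality of divisors as subvarieties (which drives the moving-curve intersection bound) and equality of numerical classes (which is the statement of extremality). Once the geometric dichotomy \emph{either} $D_i = D$ \emph{or} $[D_i]\cdot\mathcal{C} \geq 0$ is secured, the remainder is a short numerical argument using $\mathcal{C}$ and an ample class as complementary witnesses: the moving curve detects the $D$-components, and the ample class rules out any other components from appearing at all.
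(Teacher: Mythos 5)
Your argument is correct and is essentially the standard proof of this lemma (the paper does not reprove it, citing Chen--Coskun, whose argument is the same): the moving curve forces $D$ itself to appear in any effective decomposition with total coefficient at least $1$, and intersecting with a power of an ample class then kills every other term. The one point you rightly isolate --- that $D_i \neq D$ as subvarieties implies $D_i \cap D \subsetneq D$, so a general member of the moving family is not contained in $D_i$ and hence $[D_i]\cdot[\mathcal{C}] \geq 0$ --- is exactly the geometric input of the cited proof, so nothing further is needed.
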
 

Unfortunately, there is no analog to Lemma \ref{lem:divisoronly} for higher codimension cycles, and its failure to generalize is partially responsible for the overall lack of information about cones of such classes. In these cases, subtler techniques must be used. Of particular utility is the notion of the \emphbf{index} of a cycle under a morphism.

\begin{definition}[{\cite{chencoskun2015}}]
Let $f:X\to Y$ be a morphism between complete varieties. For an irreducible subvariety $Z\subset X$ define the \emphbf{index of $Z$ under $f$} as
\begin{align*}
e_f(Z) = \dim Z - \dim f(Z).
\end{align*}
Note that $e_f(Z) > 0$ if and only if $Z$ drops dimension under $f$.
\end{definition}

The index is not well-defined on numerical classes of cycles; however, the following proposition shows that the index does provide a well-defined lower-bound across effective decompositions. It also rules out certain cycles from appearing in an effective decomposition.

\begin{proposition}[{\cite[Proposition 2.1]{chencoskun2015}}]\label{prop:indexbound}
Let $f:X\to Y$ be a morphism between projective varieties and let $k > m \geq 0$ be two integers. Let $Z \subset X$ be $k$-dimensional, and suppose $e_f(Z) \geq k-m > 0$. If $\DD[Z] = \sum_s a_s[E_s]$ is an effective decomposition of $Z$, then $e_f(E_s) \geq k-m$ for every $s$. \hfill $\square$
\end{proposition}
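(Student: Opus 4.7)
The plan is to proceed by contradiction: assume some $E_j$ in the decomposition satisfies $e_f(E_j) < k - m$, so that $d_j := \dim f(E_j) \geq m + 1$. I would then exhibit a nef class $\alpha \in N_0(X)$ pairing to zero with $[Z]$ but strictly positively with $[E_j]$. Since $\alpha \cdot [E_i] \geq 0$ for every $i$ (nef against effective), the chain
\begin{align*}
0 \;=\; \alpha \cdot [Z] \;=\; \sum_{i=1}^r a_i \, \alpha \cdot [E_i] \;\geq\; a_j \, \alpha \cdot [E_j] \;>\; 0
\end{align*}
then gives the desired contradiction, since $a_j > 0$ by assumption.

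The test class I have in mind is $\alpha := (f^* H)^{m+1} \cdot h^{\,k - m - 1} \in N_0(X)$, where $H$ is a very ample divisor on $Y$ and $h$ is a very ample divisor on $X$; both factors are nef, so $\alpha$ is nef. I would represent $\alpha$ concretely by a proper intersection $f^{-1}(W) \cap H'_1 \cap \cdots \cap H'_{k-m-1}$, where $W := H_1 \cap \cdots \cap H_{m+1} \subseteq Y$ is the complete intersection of generic members $H_i \in |H|$ and the $H'_i \in |h|$ are also chosen generically, with Bertini ensuring properness throughout. The hypothesis $\dim f(Z) \leq m$ together with $\dim W = \dim Y - m - 1$ forces $W \cap f(Z) = \emptyset$ for generic $W$, hence $f^{-1}(W) \cap Z = \emptyset$ and $\alpha \cdot [Z] = 0$ at the cycle level. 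Dually, since $d_j \geq m + 1$, a generic $W$ cuts $f(E_j)$ in a nonempty subvariety of dimension $d_j - m - 1$, so $f^{-1}(W) \cap E_j$ is a nonempty projective subvariety of dimension $(k - d_j) + (d_j - m - 1) = k - m - 1$; slicing it by $k - m - 1$ generic members of $|h|$ produces a nonempty zero-cycle, giving $\alpha \cdot [E_j] > 0$.

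The main thing to be careful about is the Bertini step: one must choose the $H_i$ on $Y$ and the $H'_i$ on $X$ in sufficiently general position that all of the relevant intersections --- $W$ with $f(Z)$, $W$ with $f(E_j)$, the preimage $f^{-1}(W)$ with $E_j$, and the hyperplane cuts on $f^{-1}(W) \cap E_j$ --- behave as the dimension counts predict, all simultaneously. This is routine in the projective setting via standard Bertini/Kleiman theorems, but it is the one place where cycle-level representatives, rather than purely numerical reasoning, are essential. Once this is in hand, nonvanishing of $\alpha \cdot [E_j]$ is automatic: a nonempty effective zero-cycle has positive degree, and hence a nonzero numerical class.
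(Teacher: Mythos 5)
The paper does not actually prove this proposition: it is imported verbatim from \cite{chencoskun2015} (hence the $\square$ with no argument), and your proposal reconstructs essentially the argument given there --- test the decomposition against the nef weight-$k$ polynomial $(f^*H)^{m+1}\cdot h^{k-m-1}$, which pairs to zero with $[Z]$ since $\dim f(Z)\le m$, nonnegatively with every effective $k$-dimensional class, and strictly positively with any $E_j$ whose image has dimension at least $m+1$. Your outline is correct; two refinements are worth making. First, for the vanishing you can skip Bertini entirely: $(f^*H)\vert_Z$ is pulled back from $f(Z)$ and $(H\vert_{f(Z)})^{m+1}=0$ already as a class on $f(Z)$ because $\dim f(Z)\le m$. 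Second, the positivity step needs slightly more than the naive count $(k-d_j)+(d_j-m-1)=k-m-1$, since fibers of $f$ can jump; the clean route is to choose $H_1,\dots,H_{m+1}$ iteratively generic so that no component of $E_j\cap f^{-1}(H_1)\cap\cdots\cap f^{-1}(H_i)$ is contained in $f^{-1}(H_{i+1})$ (a generic member of a very ample system contains no fixed positive-dimensional variety and misses any fixed point), so that by Krull the class $(f^*H)^{m+1}\cdot[E_j]$ is represented by a nonzero effective cycle of pure dimension $k-m-1$ supported on $E_j\cap f^{-1}(W)$, which is nonempty because $W\cap f(E_j)\neq\emptyset$; cutting with $k-m-1$ general members of $|h|$ then yields a zero-cycle of positive degree, which is what actually converts nonemptiness into $\alpha\cdot[E_j]>0$. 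Finally, note that pairing against $\alpha$ descends to numerical classes precisely because $\alpha$ is a polynomial in Chern classes of line bundles, matching the paper's definition of numerical equivalence, so the chain of (in)equalities is legitimate at the level of $N_k(X)$.
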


The primary topics of interest in this paper are moduli spaces of curves and their boundary strata. As discussed in Section \ref{sec:curves}, boundary strata can be realized as products of smaller moduli spaces of curves. Thus the following result is of crucial importance.

\begin{lemma}[cf. {\cite[Corollary 2.4]{chencoskun2015}}]\label{lem:cartesian}
Let $X$ and $Y$ be projective varieties, either smooth or $\mathbb{Q}$-factorial, and let $Z\subset X$ such that $[Z]$ is extremal in $\emph{Eff}_d(X)$. Then $[Z\times Y]$ is extremal in $\emph{Eff}_d(X \times Y)$.
\end{lemma}
\begin{proof}
Let
\begin{align}\label{eq:cartesiandecomp}
[Z \times Y] = \sum_s a_s [E_s] \in N^k(X\times Y)
\end{align}
be an effective decomposition. Let $\pi:X\times Y \to X$ the projection morphism; by Proposition \ref{prop:indexbound}, $e_\pi(E_s) \geq e_\pi(Z\times Y) = \dim Y$ for every $s$. Moreover, since $\pi$ is projection, $e_\pi(E_s) = \dim Y$ and $E_s = F_s \times Y$ for some $F_s \subset X$. By the projection formula, \eqref{eq:cartesiandecomp} becomes
\begin{align*}
[Z] = \sum_s a_s[F_s].
\end{align*}
Since $[Z]$ is assumed to be extremal in $X$, each $[F_s]$ is proportional to $[Z]$.

Now fix an $F_s$ and assume without loss of generality that the proportionality constant between $[F_s]$ and $[Z]$ is $1$. Suppose that the class $[F_s \times Y]$ is not proportional to $[Z\times Y]$ in $N^k(X\times Y)$. Then there must exist some $G\subset X\times Y$ such that $G$ is $k$-dimensional and
\begin{align*}
\int_{X \times Y} [Z \times Y] \cdot [G] \neq \int_{X \times Y} [F_s \times Y] \cdot [G].
\end{align*}
However, after another application of the projection formula, we find that
\begin{align*}
\int_{X} [Z] \cdot \pi_*[G] \neq \int_{X} [F_s] \cdot \pi_*[G],
\end{align*}
a contradiction.
\end{proof}

The next lemma allows the transfer of rigidity and extremality from one space to another under ideal conditions. We caution that the injectivity hypothesis is a strong requirement that is typically not satisfied, and its satisfaction depends intricately on the geometries of the spaces involved.

\begin{lemma}\label{lem:maintool}
Let $\gamma:Y \to X$ be a morphism of projective varieties which are at worst $\mathbb{Q}$-factorial, and let $Z\subset Y$ be an irreducible subvariety. Assume that $\gamma_*:N_d(Y) \to N_d(X)$ is injective. Suppose that for any effective decomposition
\begin{align*}
[\gamma(Z)] = \sum_s a_s[E_s] \in N_d(X),
\end{align*}
we have $E_s \subset \gamma(Y)$ for all $s$. If $[Z]$ is extremal (resp. rigid and extremal) in $\emph{Eff}_d(Y)$, then $[\gamma(Z)]$ is extremal (resp. rigid and extremal) in $\emph{Eff}_d(X)$.
\end{lemma}
\begin{proof}
This is a mild alteration of \cite[Proposition 2.5]{chencoskun2015}, and our proof mirrors the one given there.

Suppose
\begin{align*}
[\gamma(Z)] = \sum_{s} a_s[E_s] \in N_d(X)
\end{align*}
is an effective decomposition. Since $\gamma_*$ is injective, we have an effective decomposition
\begin{align*}
[Z] = \sum_{s} a_s[E_s'] \in N_d(Y)
\end{align*}
where $\gamma_*[E_s'] = [E_s]$. But $Z$ is extremal in $Y$, so each $[E_s']$ is proportional to $[Z]$. Again, since $\gamma_*$ is injective, $\gamma_*[E_s'] = [E_s]$ is proportional to $\gamma_*[Z] = [\gamma(Z)]$, and $[\gamma(Z)]$ is extremal.

Suppose $[\gamma(Z)]$ is not rigid. Since it is extremal it can be written
\begin{align*}
[\gamma(Z)] = c[V]
\end{align*}
for $c >0$ and some $V$ not supported on $\gamma(Z)$. A parallel argument to that just given provides a contradiction.
\end{proof}

\section{Background on Moduli Spaces of Curves}\label{sec:curves}

We collect here some of the background information necessary for this paper concerning the moduli space of curves. For a more thorough introduction to and treatment of this important space, we recommend any of \cite{harrismorrison,vakil08,acgh2013}.

Denote by $\Moduli_{g,n} = \Moduli_{g,\{p_1,\dots,p_n\}}$ the moduli space of isomorphism classes of Deligne-Mumford stable genus $g$ curves with $n$ (ordered) marked points. For fixed $g$, we may vary $n$ to obtain a family of moduli spaces related by \emphbf{forgetful morphisms}: for each $1\leq i \leq n$, the map $\pi_{i}:\Moduli_{g,n}\to\Moduli_{g,n-1}$ forgets the $i$th marked point and stabilizes the curve if necessary. The map $\pi_{n+1}$ realizes $\Moduli_{g,n+1}$ as the universal curve over $\Moduli_{g,n}$. If $S = \{p_{i_1},\dots,p_{i_m}\}\subseteq \{p_1,\dots,p_n\}$, define $\pi_S := \pi_{i_1} \circ \cdots \circ \pi_{i_m}$.

The boundary $\Moduli_{g,n} \backslash \moduli_{g,n}$ consists of nodal curves and may be written as a union of irreducible \emphbf{boundary strata}, each of which we denote by a $\Delta$ symbol, and which, depending on how we would like to emphasize the stratum, is decorated with an integer-set pair $(h;S)$ or a stable graph $\Gamma$. 
When the stratum is a \emphbf{boundary divisor}, we decorate with an integer-set pair: for $0\leq h \leq g$ and $S\subseteq \{p_1,\dots,p_n\}$, the general point of $\Delta_{h;S}$ parametrizes a genus $h$ curve containing the marked points labeled by $S$, attached at a node to a genus $g-h$ curve containing the marked points in $\{p_1,\dots,p_n\}\backslash S$.

Associated to any boundary stratum is a \emphbf{dual graph} $\Gamma$, realized as the dual graph of the general point of the stratum; such a stratum is denoted $\Delta_\Gamma$. The dual graph $\Gamma$ of $\Delta_\Gamma$ is defined as follows: if $(C;p_1\dots,p_n)$ is the marked curve parametrized by the generic point of $\Delta_\Gamma$, then $\Gamma$ has a vertex for every irreducible component of $C$ labeled by geometric genus, an edge connecting vertices when corresponding components of $C$ share a node, and labeled half-edges corresponding to the marked points $p_1,\dots,p_n$. If the half-edge corresponding to $p_i$ is attached to a vertex $v$ of $\Gamma$, we write $p_i \in v$. 
One boundary stratum $\Delta_{\Gamma'}$ is contained in another $\Delta_{\Gamma}$ if and only if $\Gamma$ can be obtained from $\Gamma'$ by a series of edge contractions. Contracting an edge of $\Gamma$ corresponds to smoothing the corresponding node of the general point of $\Delta_\Gamma$. We adopt the convention that $\Moduli_{g,n} \subseteq \Moduli_{g,n}$ is a boundary stratum with dual graph given by a single vertex with $n$ half-edges. 

A boundary stratum may be canonically identified with a product of smaller moduli spaces (modulo a symmetric group). For example,
\begin{align*}
\Delta_{h;S} \cong \Moduli_{g-h,(\{p_1,\dots,p_n\}\backslash S)\cup\{\bullet\}} \times \Moduli_{h,S\cup \{\diamond\}},
\end{align*}
where the points $\bullet$ and $\diamond$ are glued together under the isomorphism. The quotient may be non-trivial when there are no marked points or if the stratum is in the locus of curves with a self-node.

A boundary stratum $\Delta_\Gamma$ is of \emphbf{compact type} if $\Gamma$ is a tree. We call any boundary stratum of compact type where all of the genus is concentrated at one vertex of the dual graph a \emphbf{stratum of rational tails type}, and we call a genus $g$ vertex/component the \emphbf{root} of the stratum; the root is unique if $g\neq 0$. A non-root vertex/component which has exactly one edge/node (resp. more than one edge/node) is called \emphbf{external} (resp. \emphbf{internal}). A \emphbf{tail} is a tree of vertices corresponding to $\mathbb{P}^1$s with an edge connected to the root, such that the deletion of this tree does not change the connectivity of the associated dual graph.

Some specializations of Conjecture \ref{conj:extremalboundary} are already established in the literature. Most immediate is the following standard result on the effective cone of $\Moduli_{g,n}$.
\begin{proposition}\label{prop:divisorsextreme}
All boundary divisors in $\Moduli_{g,n}$ are rigid and extremal.
\end{proposition}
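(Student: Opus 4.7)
The plan is to apply Lemma \ref{lem:divisoronly}: for each irreducible boundary divisor $\Delta$ in $\Moduli_{g,n}$ I would produce a moving curve $\mathcal{C}\subset\Delta$ with $[\Delta]\cdot[\mathcal{C}]<0$. For a reducible boundary divisor $\Delta_{h;S}$, the gluing morphism
\[
\gamma:\Moduli_{h,S\cup\{q\}}\times\Moduli_{g-h,([n]\setminus S)\cup\{q'\}}\to\Moduli_{g,n}
\]
has image $\Delta_{h;S}$, and the standard self-intersection formula gives $\gamma^{*}\mathcal{O}(\Delta_{h;S})=\mathcal{O}(-\psi_q-\psi_{q'})$, where $\psi_q,\psi_{q'}$ are cotangent classes at the two branches of the node. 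After possibly swapping the two factors one may assume $2h-2+|S|>0$; the remaining low-dimensional edge cases either reduce to the other factor or force $\Delta_{h;S}$ itself or the ambient space to be at most one-dimensional, in which case extremality is immediate.

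To produce $\mathcal{C}$, fix a general point $y$ of the right-hand factor and a general fiber $F$ of the forgetful morphism $\pi_q:\Moduli_{h,S\cup\{q\}}\to\Moduli_{h,S}$; pushing $F\times\{y\}$ forward under $\gamma$ yields a curve $\mathcal{C}\subset\Delta_{h;S}$. As the choice of $F$ and $y$ varies, the resulting curves cover $\Delta_{h;S}$, so $\mathcal{C}$ is moving. On such a fiber $F\cong C_1$ the restriction of $\psi_{q'}$ vanishes, while $\psi_q|_F=\omega_{C_1}(\textstyle\sum_{i\in S}p_i)$ has degree $2h-2+|S|>0$, so
\[
[\Delta_{h;S}]\cdot[\mathcal{C}]=-(2h-2+|S|)<0,
\]
and Lemma \ref{lem:divisoronly} applies. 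The irreducible boundary divisor $\Delta_{\text{irr}}$ is handled identically, replacing $\gamma$ with the self-gluing morphism $\Moduli_{g-1,n+2}\to\Moduli_{g,n}$ that identifies the last two marked points and using the analogous normal-bundle formula.

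The main obstacle I foresee is enumerating the small-$(g,n)$ degenerate cases where the non-rigidity assumption $2h-2+|S|>0$ fails on both sides of the gluing simultaneously. These require either observing that the ambient space degenerates to $\Moduli_{0,4}$ or $\Moduli_{1,1}$ (whose boundary divisors are points and hence trivially extremal), or treating $\Delta_{h;S}$ itself as a curve with negative self-intersection (for instance, $\Delta_{1;\emptyset}\subset\Moduli_{1,2}$, which is isomorphic to $\Moduli_{1,1}$ and intersects itself in $-\psi_q$). Once these cases are dispatched, the construction above handles all remaining boundary divisors uniformly.
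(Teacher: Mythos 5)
Your overall strategy is exactly the one behind the proof the paper points to: Proposition \ref{prop:divisorsextreme} is proved in the paper by citing Chen--Coskun, whose argument is precisely the moving-curve criterion of Lemma \ref{lem:divisoronly} applied to curves swept out through the gluing description of the boundary divisor. For the separating divisors $\Delta_{h;S}$ your construction is correct: the pullback $-\psi_q-\psi_{q'}$, the fiber of the universal curve, and the degree count $-(2h-2+|S|)$ all check out whenever $2h-2+|S|>0$ on at least one side.

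There are, however, two genuine gaps. First, your enumeration of the degenerate cases is incomplete: both sides of the gluing fail exactly when $(g,n)\in\{(0,4),(1,2),(2,0)\}$, and in the last case $\Delta_{1;\varnothing}\subset\Moduli_{2}$ is a two-dimensional divisor in a three-dimensional space, so your claim that the residual cases force the divisor or the ambient space to be at most one-dimensional is false. (The fix is easy: sweep $\Delta_{1;\varnothing}$ by the curves $\Moduli_{1,1}\times\{y\}$, on which $\psi_q$ has degree $1/24$ and $\psi_{q'}$ restricts to zero, giving negative intersection; the generically $2\colon\!1$ gluing map only rescales by a positive constant.) Second, and more seriously, $\Delta_{\mathrm{irr}}$ is \emph{not} handled identically. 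The pullback of $\delta_{\mathrm{irr}}$ under the self-gluing map $\Moduli_{g-1,n+2}\to\Moduli_{g,n}$ is $-\psi_q-\psi_{q'}$ plus boundary corrections (the divisors on which $q$ and $q'$ lie on opposite sides of a node, together with $\delta_{\mathrm{irr}}$ itself), and your chosen fiber meets these corrections; redoing the count gives intersection $-2(g-1)$, which is fine for $g\ge 2$ but is $0$ for $g=1$. Indeed in $\Moduli_{1,2}$ one has $\delta_{\mathrm{irr}}\cdot\Delta_{\mathrm{irr}}=0$ and $\Delta_{\mathrm{irr}}$ is itself a curve, so no moving curve in it pairs negatively and Lemma \ref{lem:divisoronly} simply cannot apply; extremality of $\Delta_{\mathrm{irr}}$ in $\Moduli_{1,n}$ needs a different argument (for instance, it is the irreducible fiber of the morphism $\Moduli_{1,n}\to\Moduli_{1,1}$ over the nodal point, so any effective decomposition consists of vertical divisors, which are numerically proportional to it). You should either supply such an argument or restrict the clause about $\Delta_{\mathrm{irr}}$ to $g\ge 2$ with the corrected pullback formula.
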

\begin{proof}
This result has been well-known for quite some time. In {\cite[Proposition 3.1]{chencoskun2015}}, the authors give a straightforward proof by constructing, for each $\Delta_{h;S}$, an explicit moving curve $\calC_{h;S}$ with $[\Delta_{h,S}] \cdot [\calC_{h;S}] <0 $ and applying Lemma \ref{lem:divisoronly}.
\end{proof}

Additionally, \cite{chencoskun2015} shows extremality for pinwheel strata 
in genus zero in arbitrarily-high codimension and in genus one in codimension-two. They also show that all codimension-two strata are extremal in $\Moduli_{g}$ for $g\geq 2$, as well as extremality for several other miscellaneous boundary strata, some of compact type and some contained in the irreducible divisor. In \cite{schaffler2015}, the author extends the genus zero result to allow for further special degenerations.

The combinatorial stratification of $\Moduli_{g,n}$ and Proposition \ref{prop:divisorsextreme} allow us to bootstrap rigidity and extremality of boundary divisors to rigidity and extremality for higher-codimension boundary strata relative to lower-codimension boundary strata. 

\begin{corollary}\label{cor:divisorspluscartesian}
Let $\Delta$ be a boundary stratum in $\Moduli_{g,n}$ of compact type. Let $\Delta'$ be a codimension-1 degeneration of $\Delta$ which is also of compact type. Then $[\Delta']$ is extremal in $\emph{Eff}^1(\Delta)$.
\end{corollary}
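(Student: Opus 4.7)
The plan is to induct on the codimension $k$ and exploit the product decomposition of compact-type strata. Since $\Delta$ is of compact type, we write $\Delta = M_1 \times \cdots \times M_r$ with $M_v = \Moduli_{g_v, n_v}$, and $\Delta' = D_1 \times \cdots \times D_r$, where each $D_v \subseteq M_v$ is a compact-type boundary stratum of codimension $k_v$ (possibly trivial) with $\sum_v k_v = k$. For the base case $k = 1$, exactly one $D_v$ is a proper boundary divisor while the others are full $M_w$; Proposition \ref{prop:divisorsextreme} gives extremality of $D_v$ in $M_v$, and Lemma \ref{lem:cartesian} applied with $Y = \prod_{w \neq v} M_w$ then promotes this to extremality of $\Delta' = D_v \times Y$ in $\Delta$.

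For the inductive step, I would read the statement as also covering the trivial stratum $\Moduli_{g,n}$ so that the inductive hypothesis supplies extremality of each $D_v$ in $M_v$. The remaining ingredient is a product-extension of Lemma \ref{lem:cartesian}: if $Z_v \subset X_v$ is extremal of codimension $k_v$ for each $v$, then $\prod_v Z_v$ is extremal of codimension $\sum_v k_v$ in $\prod_v X_v$. Following Lemma \ref{lem:cartesian}'s proof strategy, I would take any effective decomposition $[\prod_v Z_v] = \sum_i a_i [E_i]$ in $N^k(\prod_v X_v)$ and apply Proposition \ref{prop:indexbound} to each projection $\pi_v: \prod_w X_w \to X_v$: using $e_{\pi_v}(\prod_w Z_w) = \sum_{w \neq v} \dim Z_w$, we obtain $\dim \pi_v(E_i) \leq \dim Z_v$ for every $v$ and $i$. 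Since $E_i \subseteq \prod_v \pi_v(E_i)$ with $\dim E_i = \sum_v \dim Z_v$, dimensions force $E_i = \prod_v F_v^{(i)}$, where $F_v^{(i)} := \pi_v(E_i)$ has dimension $\dim Z_v$.

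To finish, I would fix $v$, choose $\beta_w \in N^{\dim X_w - k_w}(X_w)$ for $w \neq v$ to be a sufficiently general intersection of ample divisors on $X_w$, and intersect the identity $[\prod_u Z_u] = \sum_i a_i [\prod_u F_u^{(i)}]$ with $\prod_{w \neq v} \pi_w^* \beta_w$. Pushing forward by $\pi_v$ via the projection formula yields $(\prod_{w \neq v} \int_{X_w} [Z_w] \beta_w)[Z_v] = \sum_i a_i (\prod_{w \neq v} \int_{X_w} [F_w^{(i)}] \beta_w)[F_v^{(i)}]$ in $N^{k_v}(X_v)$. Ampleness of the $\beta_w$ makes all intersection numbers strictly positive, so this is an effective decomposition of $[Z_v]$; extremality of $Z_v$ then forces each $[F_v^{(i)}]$ to be a non-negative multiple of $[Z_v]$. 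Running this for every $v$ and invoking the Künneth-type decomposition of $N^*(\prod_v X_v)$ (standard for products of $\mathbb{Q}$-factorial moduli spaces with real coefficients) shows each $[E_i]$ is proportional to $[\Delta']$. The main technical obstacle I foresee is ensuring positivity in the pushforward step, which the choice of ample $\beta_w$ handles; a secondary concern is the invocation of the Künneth structure, which I would treat as a standard fact.
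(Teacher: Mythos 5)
Your base case coincides with the paper's proof (Proposition \ref{prop:divisorsextreme} plus Lemma \ref{lem:cartesian}), and your product-extension of Lemma \ref{lem:cartesian} is in itself a reasonable generalization along the same lines (Proposition \ref{prop:indexbound} applied to each projection, then ample slicing and the exterior-product structure on numerical classes, which does behave well on these $\mathbb{Q}$-factorial spaces). But the inductive step has a genuine gap, and it sits exactly at the hard point. Writing $\Delta' = D_1\times\cdots\times D_r$ with $D_v\subset \Moduli_{g_v,n_v}$ of codimension $k_v$, your argument needs each $D_v$ to be extremal in $\text{Eff}^{k_v}(\Moduli_{g_v,n_v})$, i.e.\ extremality of a compact-type stratum inside a \emph{single} moduli space. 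Induction on $k$ does not supply this: when the degeneration is concentrated in one factor ($k_v=k$, which includes the case $r=1$), you are invoking the statement at the same codimension, so the step is circular; and even when all $k_v<k$, the hypothesis you invoke is the corollary ``with $\Delta$ the trivial stratum,'' i.e.\ extremality of compact-type strata of codimension $k_v\geq 2$ in $\Moduli_{g_v,n_v}$ itself. That statement is precisely the compact-type case of Conjecture \ref{conj:extremalboundary}, which the paper does not prove (Theorem \ref{thm:rattailextreme} and Section \ref{sec:extra} only establish special families). Indeed, a useful sanity check: under your reading, the corollary applied to $\Delta=\Delta_{0;\{n-1,n\}}\cong\Moduli_{g,n-1}\times\Moduli_{0,3}$ already yields extremality of every compact-type stratum of $\Moduli_{g,n-1}$, so if your induction closed, the main conjecture (compact type) would follow — a sign the step cannot be right as stated.

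The paper's proof avoids this entirely: it never invokes higher-codimension extremality within a factor. It degenerates one node at a time — at each stage the stratum is a product of moduli spaces, the next degeneration replaces one factor $\Moduli_{g_1,n_1}$ by a boundary \emph{divisor} $\Moduli_{g_a,n_a}\times\Moduli_{g_b,n_b}$ inside it, extremal by Proposition \ref{prop:divisorsextreme}, and Lemma \ref{lem:cartesian} transfers this to codimension-one extremality inside the ambient stratum; the higher-codimension claim is obtained by iterating this codimension-one step rather than by assembling factor-wise extremality of high-codimension pieces. So the only input ever used is divisor extremality, which is known. To repair your write-up you would either have to restructure along the paper's one-node-at-a-time iteration, or restrict the class of degenerations so that every $D_v$ has $k_v\leq 1$; as it stands, the reduction ``each $D_v$ is extremal in its own $\Moduli_{g_v,n_v}$'' assumes the open conjecture rather than proving the corollary.
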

\begin{proof}
Suppose $\Delta'$ is a codimension-one degeneration of $\Delta$. We may write
\begin{align*}
\Delta &= \prod_{i=1}^r \Moduli_{g_i,n_i} = \Moduli_{g_1,n_1} \times \prod_{i=2}^r \Moduli_{g_i,n_i}
\end{align*}
and without loss of generality
\begin{align*}
\Delta' = \Moduli_{g_a,n_a} \times \Moduli_{g_b,n_b} \times \prod_{i=2}^r \Moduli_{g_i,n_i}.
\end{align*}
We view $\Moduli_{g_a,n_a} \times \Moduli_{g_b,n_b}$ as a boundary divisor in $\Moduli_{g_1,n_1}$, which is extremal by Proposition \ref{prop:divisorsextreme}. Therefore $[\Delta']$ is extremal in $\text{Eff}^1(\Delta)$ by Lemma \ref{lem:cartesian}.
\end{proof}

It is sometimes convenient to do the same for rigidity. This is not always possible: because all points are rationally equivalent on $\Moduli_{0,4} \cong \mathbb{P}^1$, pushing forward the relation $[\Delta_{0;\{i_1,i_2\}}] = [\Delta_{0;\{i_3,i_4\}}]$ on $\Moduli_{0,4}$ under a gluing morphism induces an equivalence of classes among certain classes of rational tails strata in $\Moduli_{g,n}$. These are called the Witten-Dijkgraaf-Verlinde-Verlinde (WDVV) relations, and they preclude rigidity when a boundary stratum has a dual graph with two or more adjacent trivalent vertices. However, if the dual graph does not have adjacent trivalent vertices, then Proposition \ref{prop:divisorsextreme} can be used to show both rigidity and extremality of the associated stratum simultaneously.

\begin{corollary}\label{cor:relativedivisorrigex}
Let $\Delta_{\Gamma'}$ be a boundary stratum in $\Moduli_{g,n}$ of compact type with dual graph $\Gamma'$, and fix two adjacent vertices $v,w$ of $\Gamma'$ of respective genus $g_v$ and $g_w$ and respective valence $n_v$ and $n_w$. Let $\Gamma$ be the graph obtained from $\Gamma'$ by contracting the edge between $v$ and $w$. If $g_v+g_w + n_v + n_w > 6$, 
then $[\Delta_{\Gamma'}]$ is rigid and extremal in $\emph{Eff}^1(\Delta_{\Gamma})$.
\end{corollary}
\begin{proof}
The strata may be written
\begin{align*}
\Delta_{\Gamma'} &= \Moduli_{g_v,n_v} \times \Moduli_{g_w,n_w} \times \prod_{j=1}^r \Moduli_{g_j,n_j}, \\
\Delta_{\Gamma} &= \Moduli_{g_v+g_w,n_v+n_w-2} \times \prod_{j=1}^r \Moduli_{g_j,n_j},
\end{align*}
Without loss of generality, suppose $g_v >0$ or $n_v > 3$. Fix a general point $(C_v;p_{i_1},\dots,p_{i_{n_v-1}}), \in \Moduli_{g_v,n_v-1}$ and a general point $(C_w\cup C_1 \cup \cdots \cup C_r; p_{i_1},\dots,p_{i_{s}}) \in \Moduli_{g_w,n_w-1} \times \prod_{j=1}^r \Moduli_{g_j,n_j}$. Let $\calC$ be the moving curve in $\Moduli_{g_v,n_v} \times \Moduli_{g_w,n_w} \times \prod_{j=1}^r \Moduli_{g_j,n_j}$ obtained by gluing a fixed smooth point $\diamond$ on $C_w$ (distinct from the $p_i$) to a varying point $\bullet$ on $C_v$.

If $\text{pr}:\Delta_{\Gamma} \to \Moduli_{g_v+g_w,n_v+n_w-2}$ is the projection morphism to the first factor, then by the projection formula,
\begin{align*}
[\Delta_{\Gamma'}] \cdot [\calC] &= \text{pr}^*[\Delta_{g_v;\{p_{i_1},\dots,p_{i_{n_v-1}},\bullet\}}] \cdot [\calC] \\
&= [\Delta_{g_v;\{p_{i_1},\dots,p_{i_{n_v-1}},\bullet\}}] \cdot \text{pr}_*[\calC].
\end{align*}
But $\pi_*[\calC]$ is exactly the moving curve constructed in {\cite[Proposition 3.1]{chencoskun2015}} to give negative intersection with $[\Delta_{g_v;\{p_{i_1},\dots,p_{i_{n_v-1}},\bullet\}}]$. Therefore $[\Delta_{\Gamma'}]$ is rigid and extremal in $\text{Eff}^1(\Delta_{\Gamma})$ by Lemma \ref{lem:divisoronly}.
\end{proof}

\begin{remark}\label{rem:compacttype}
The requirement that the stratum be of compact type is imposed to avoid concern over the need to write them as quotients of products of moduli spaces under symmetric group actions. One practical effect of these requirements is in ruling out degenerations that induce self-nodes.
\end{remark}

The full moduli space $\Moduli_{g,n}$ is not merely a product of smaller moduli spaces, so it remains to push extremality (and rigidity) within a boundary divisor to extremality (and rigidity) within the full space. Lemma \ref{lem:maintool} is one tool to do so, in conjunction with the Hassett spaces introduced in \cite{hassett}.

\begin{definition}
Fix \emphbf{(ordered) weight data} $\calA = (a_1,a_2,\dots,a_n)$ so that $a_i\in (0,1]\cap\mathbb{Q}$
. A (nodal) marked curve $(C;p_1,\dots,p_n)$ is \emphbf{$\calA$-stable} if 
\begin{itemize}
\item $p_i\in C$ is smooth for every $i\in [n]$;
\item $\DD \omega_C + \sum_{i=1}^n a_ip_n$ is ample; and
\item for every point $x\in C$, we have $\DD\sum_{p_i=x}a_i \leq 1$.
\end{itemize}
The \emphbf{Hassett space} $\Moduli_{g,\calA}$ is the moduli space of $\calA$-stable curves of genus $g$ up to isomorphism.
\end{definition}

When $2g-2+\sum a_i > 0$, the Hassett space $\Moduli_{g,\calA}$ is a non-empty, smooth, proper Deligne-Mumford stack. In this case there exists a \emphbf{reduction morphism}
\begin{align*}
\rho:\Moduli_{g,n}\to\Moduli_{g,\calA},
\end{align*}
which on the level of curves reduces the weights and stabilizes if necessary by contracting unstable rational components. The exceptional locus of $\rho$ consists of curves that have a rational tail with at least three marked points of total weight at most one.

\section{Main Results}\label{sec:rattail}

The aim of this section is to make progress on Conjecture \ref{conj:extremalboundary}, with a focus on rational tails strata. First, with an eye towards applying Lemma \ref{lem:maintool}, we verify the injectivity of the pushforward of the inclusion of a rational tails boundary stratum into $\Moduli_{g,n}$ using results from \cite{keel92} and \cite{tavakol}.

\begin{lemma}\label{lem:timespt}
Let $S \subset \{p_1,\dots,p_n\}$ such that $|S| \geq 2$, and let $\gamma:\Delta_{0;S} \to \Moduli_{g,n}$ be inclusion. A class $\alpha \in N^*(\Moduli_{g,n-|S|+1})$ is non-zero if and only if $\gamma_*(\alpha \otimes [pt]) \in N^*(\Moduli_{g,n})$ is non-zero.
\end{lemma}
\begin{proof}
The composition of $\gamma$ with the forgetful morphism which forgets the marks in $S$ is the identity on $\Moduli_{g,n-|S|+1}$.
\end{proof}

\begin{proposition}\label{prop:gammainjective}
The pushforward $\gamma_*:N_*(\Delta_{0;S}) \to N_*(\Moduli_{g,n})$ is injective.
\end{proposition}
\begin{proof}
Let $Q \in N^k(\Delta_{0;S})$. By \cite{keel92},
\begin{align*}
N^k(\Delta_{0;S}) \cong \bigoplus_{i=0}^k N^{k-i}(\Moduli_{g,n-|S|+1}) \otimes N^{i}(\Moduli_{0,s+1}),
\end{align*}
so we may write
\begin{align*}
Q = \sum_{i=0}^k \sum_{\ell=1}^{m_i} \alpha^{k-i}_{\ell} \otimes \beta^{i}_{\ell},
\end{align*}
where $\alpha^{k-i}_{\ell} \in N^{k-i}(\Moduli_{g,n-|S|+1})$, $\beta^{k-i}_{\ell} \in N^{i}(\Moduli_{0,|S|+1})$, and the set $\{\beta^i_1,\dots,\beta^i_{m_i}\}$ is linearly independent for each $i$. Since $N^*(\Moduli_{0,|S|+1})$ is generated by boundary divisors, and since in $\Moduli_{0,|S|+1}$ the product of two boundary strata is again a sum of boundary strata, all of the  $\beta^i_{\ell}$ can be written as sums of boundary strata. 

Let $r \in \{0,1,\dots,k\}$ be the largest index for which some $\beta^r_{\ell}$ appears in $Q$, and fix $\beta=\beta^r_{j}$ for some $j\in\{1,\dots,m_r\}$ with $\alpha_j^{k-r} \neq 0$. By \cite{tavakol}, there exists an element $\beta^\vee \in N^{|S|-2-r}(\Moduli_{0,|S|+1})$ so that $\beta \cdot \beta^\vee = [pt]$ and $\beta_{\ell}^i \cdot \beta^\vee = 0$ for all $(i,\ell) \neq (r,j)$ with $i \leq r$. Moreover, $\beta^\vee$ can also be written as a sum of boundary strata $\beta^\vee = \sum_{b} [\Delta_b]$. For each $b$, if $\Gamma_b$ is the dual graph of $\Delta_b$, construct a new graph $\tilde{\Gamma}_b$ by replacing the vertex carrying the marked point $\bullet$ with a genus $g$ vertex carrying the marks $\{p_i \stc i\not\in S\}$. Define $\tilde{\Delta}_b$ to be the boundary stratum in $\Moduli_{g,n}$ with dual graph $\tilde{\Gamma}_b$, so that $\gamma^*([\tilde{\Delta}_b]) = [\Moduli_{g,n-|S|+1}] \otimes [\Delta_b]$, and let $\tilde{\beta}^\vee = \sum_{b} [\tilde{\Delta}_b]$.

Now suppose that $\gamma_*(Q) = 0$. Then
\begin{align*}
0 &= \gamma_*(Q) \cdot \tilde{\beta}^\vee \\
&= \gamma_*\left(\left(\sum_{i=0}^k \sum_{\ell=1}^{m_i} \alpha^{k-i}_{\ell} \otimes \beta^{i}_{\ell} \right) \cdot \gamma^*\left(\sum_{b} [\tilde{\Delta}_b]\right) \right) \\
&= \gamma_*\left(\sum_{i=0}^k \sum_{\ell=1}^{m_i} \alpha^{k-i}_{\ell} \otimes \left(\beta^{i}_{\ell} \cdot \beta^\vee \right) \right) \\
&= \gamma_*\left(\sum_{\ell=1}^{m_r} \alpha^{k-r}_{\ell} \otimes \left(\beta^{r}_{\ell} \cdot \beta^\vee \right) \right) \\
&= \gamma_*\left(\alpha^{k-r}_{j} \otimes [pt] \right).
\end{align*}
By Lemma \ref{lem:timespt}, $\alpha^{k-r}_{j} = 0$ in $N^{k-r}(\Moduli_{g,n-|S|+1})$. But $\alpha_{j}^{k-r}$ was assumed to be nonzero, a contradiction. Hence $\gamma_*$ is injective.
\end{proof}

\begin{corollary}\label{cor:gengammainjective}
Let $\Delta'$ be a rational tails boundary stratum in $\Moduli_{g,n}$ contained in some other boundary stratum $\Delta$. The pushforward $\gamma_*:N_*(\Delta') \to N_*(\Delta)$ is injective.
\end{corollary}
\begin{proof}
First assume $\codim_{\Moduli_{g,n}}(\Delta') = \codim_{\Moduli_{g,n}}(\Delta)+1$. If the roots of the curves parametrized by the general points of $\Delta$ and $\Delta'$ share the same set of marks, the strata may be written
\begin{align*}
\Delta' &= \Moduli_{g,\ell} \times \Moduli_{0,n_0} \times \Moduli_{0,n_1} \times \prod_{j=2}^r \Moduli_{0,n_j}, \\
\Delta &= \Moduli_{g,\ell} \times \Moduli_{0,n_0 + n_1 - 2} \times \prod_{j=2}^r \Moduli_{0,n_j}.
\end{align*}
Then by \cite{keel92},
\begin{align*}
N^*(\Delta') & \cong N^*(\Moduli_{g,\ell}) \otimes N^*\left(\Moduli_{0,n_0} \times \Moduli_{0,n_1}\right) \otimes N^*\left(\prod_{j=2}^r \Moduli_{0,n_j}\right); \\
N^*(\Delta) &\cong N^*(\Moduli_{g,\ell}) \otimes N^*\left(\Moduli_{0,n_0 + n_1 - 2}\right) \otimes N^*\left(\prod_{j=2}^r \Moduli_{0,n_j}\right).
\end{align*}
By Proposition \ref{prop:gammainjective}, $\eta:N^*\left(\Moduli_{0,n_0} \times \Moduli_{0,n_1}\right) \to N^*\left(\Moduli_{0,n_0 + n_1 - 2}\right)$ is injective, so $\gamma_* = \text{id} \otimes \eta \otimes \text{id}$ is injective.

If the roots do not share the same set of marks, the strata may be written
\begin{align*}
\Delta' &= \Moduli_{g,\ell-m} \times \Moduli_{0,m+2} \times \prod_{j=1}^r \Moduli_{0,n_j}, \\
\Delta &= \Moduli_{g,\ell} \times \prod_{j=1}^r \Moduli_{0,n_j},
\end{align*}
and as above, Proposition \ref{prop:gammainjective} implies that $\eta:N^*\left(\Moduli_{g,\ell-m} \times \Moduli_{0,m+2}\right) \to N^*\left(\Moduli_{g,\ell}\right)$ is injective. Hence $\gamma_* = \eta \otimes \text{id}$ is injective.

If $\codim_{\Moduli_{g,n}}(\Delta') > \codim_{\Moduli_{g,n}}(\Delta) + 1$, then $\gamma_*$ is the composition of injective functions and is therefore injective.
\end{proof}

For rational curves, \cite[Lifting Lemma]{schaffler2015} establishes that for the gluing morphism 
\begin{align} \label{eq:iota}
\iota : \Moduli_{0,n+1} \times \Moduli_{0,\{q_{0}, q_1,q_2\}} \to \Moduli_{0,n+2}
\end{align}
which attaches $p_{n+1}$ to $q_0$, if $\alpha \in \text{Eff}^k(\Moduli_{0,n+1})$ is extremal, then $\iota_*(\alpha, [\Moduli_{0,\{q_1,q_2,q_3\}}]) \in \text{Eff}^{k+1}(\Moduli_{0,n+2})$ is extremal as well. With Proposition \ref{prop:gammainjective}, this result can be extended, which in turn implies the extremality of all boundary strata in genus zero.

\begin{lemma}\label{lem:addmodulitail}
Let $Z \subseteq \Moduli_{g,n+1}$, fix $\ell \geq 3$, and let $\gamma$ be the gluing morphism
\begin{align*}
\gamma: \Moduli_{g,n+1} \times \Moduli_{0,\ell+1} \to \Moduli_{g,n+\ell}.
\end{align*}
If $[Z] \in \emph{Eff}^k(\Moduli_{g,n+1})$ is extremal (resp. rigid and extremal), then $[\gamma(Z \times \Moduli_{0,\ell+1})] \in \emph{Eff}^{k+1}(\Moduli_{g,n+\ell})$ is extremal (resp. rigid and extremal).
\end{lemma}
\begin{proof}
Label the first $n+1$ points $p_1,\dots,p_n, \bullet$ and the last $\ell+1$ points $\diamond, p_{n+1},\dots,p_{n+\ell}$, so that $\gamma$ attaches $\bullet$ to $\diamond$. Define weight data $\calA = (a_1,\dots,a_{n+\ell})$ by
\begin{align*}
a_i= \left\{ \begin{array}{cl}
1, & i \leq n \\
\frac{1}{\ell}, & i > n \end{array} \right.
\end{align*}
and let $\rho : \Moduli_{g,n+\ell} \to \Moduli_{g,\calA}$ be the reduction morphism. The exceptional locus of $\rho$ is
\begin{align*}
\bigcup_{\substack{L \subseteq \{p_{n+1},\dots,p_{n+\ell}\} \\ |L|>2}} \Delta_{0;L},
\end{align*}
and note that $e_{\rho}(\Delta_{0;L}) = |L|-2$ for any $L \subseteq \{p_{n+1},\dots,p_{n+\ell}\}$ with $|L|>2$. Let
\begin{align*}
[\gamma(Z\times \Moduli_{0,\ell+1})] &= \sum_{j=1}^r a_j[E_j] \in N^{k+1}(\Moduli_{g,n+\ell})
\end{align*}
be an effective decomposition. The dimension of $\gamma(Z\times \Moduli_{0,\ell+1})$ is $\dim(Z) + \ell - 2$, and the dimension of $\rho(\gamma(Z\times \Moduli_{0,\ell+1}))$ is $\dim(Z)$; therefore $e_{\rho}(\gamma(Z\times \Moduli_{0,\ell+1})) = \ell-2$. By Proposition \ref{prop:indexbound}, $e_{\rho}(E_j) \geq e_{\rho}(\gamma(Z \times \Moduli_{0,\ell+1})) = \ell - 2 > 0$ for any $j$, so $E_j \subseteq \Delta_{0;\{p_{n+1},\dots,p_{\ell}\}} = \gamma(\Moduli_{g,n+1} \times \Moduli_{0,\ell+1})$. Since $\gamma_*$ is injective by Proposition \ref{prop:gammainjective} and $[Z \times \Moduli_{0,\ell+1}]$ is extremal (resp. rigid and extremal) in $\text{Eff}^{k}(\Moduli_{g,n+1} \times \Moduli_{0,\ell+1})$ by Lemma \ref{lem:cartesian}, we conclude that $[\gamma(Z\times \Moduli_{0,\ell+1})]\in \text{Eff}^{k+1}(\Moduli_{g,n+\ell})$ is extremal (resp. rigid and extremal) by Lemma \ref{lem:maintool}.
\end{proof}

\begin{corollary}\label{cor:m0nextremal}
All boundary strata in $\Moduli_{0,n}$ are extremal.
\end{corollary}
\begin{proof}
Any boundary stratum in $\Moduli_{0,n}$ is either a boundary divisor, which is therefore extremal by Proposition \ref{prop:divisorsextreme}, or can be written as the image of some sequence of gluing morphisms, which is therefore extremal by the extremality of boundary divisors, \cite[Lifting Lemma]{schaffler2015}, and Lemma \ref{lem:addmodulitail}.
\end{proof}

\begin{remark}\label{rem:counterexamplem0n}
The analogous statement of Corollary \ref{cor:m0nextremal} fails for $\tilde{\moduli}_{0,n} = \Moduli_{0,n} / \Sigma_n$, the moduli space of stable rational curves with $n$ \emph{unordered} marked points. A counterexample for $n=7$ is computed explicitly in \cite[Table 1]{moon17}: the $F$-curve class $[F_{1,1,2,3}]$ is not extremal in $\text{Eff}^3(\tilde{\moduli}_{0,n})$, as $[F_{1,1,2,3}] = \frac{1}{2}([F_{1,1,1,4}] + [F_{1,2,2,2}])$.
\end{remark}

\begin{corollary}\label{cor:allmoduli}
Let $\Delta \subset \Moduli_{g,n}$ be a rational tails boundary stratum of codimension $k$ such that the dual graph of $\Delta$ has no trivalent non-root vertices. Then $[\Delta]$ is extremal in $\emph{Eff}^k(\Moduli_{g,n})$.
\end{corollary}
\begin{proof}
The proof is identical to Corollary \ref{cor:m0nextremal}, except that the gluing morphisms from \cite[Lifting Lemma]{schaffler2015} do not appear.
\end{proof}

Unlike \cite[Lifting Lemma]{schaffler2015}, Lemma \ref{lem:addmodulitail} is independent of genus.
However, the assumption of $\ell \geq 3$ is crucial,
since it guarantees that $e_\rho(Z \times \Moduli_{0,\ell+1})$
is positive, and the lack of direct generalization of \cite[Lifting Lemma]{schaffler2015} to positive genus is the major hurdle in extending Corollary \ref{cor:m0nextremal} to an analogous statement regarding rational tails strata for $g\geq 1$. There are some special cases in which the lack of a fully generalized Lifting Lemma can be circumvented; one technique, which first appeared in \cite{chentarasca} in regards to hyperelliptic classes, is to push forward an effective decomposition of a class of interest onto rigid and extremal classes via forgetful morphisms in enough ways so that a modularity argument implies that the decomposition must be trivial. Although the rigidity requirement on the image of the forgetful morphisms fails to hold in some cases, in certain situations, composing forgetful morphisms can overcome this obstacle; the next lemma addresses this situation.

Similar to the morphism $\iota$ in \eqref{eq:iota}, define special gluing morphisms as follows. Fix $n\geq 1$ and let $S = \{q_1,\dots,q_s\}$ be an ordered finite set with $|S| \geq 2$. For $p_i \in \{p_1,\dots,p_n\}$, define
\begin{align*}
\tau^{p_i,S}:\Moduli_{g,n} \to \Moduli_{g,n+s-1}
\end{align*}
as the restriction to the first factor of the morphism
\begin{align*}
\Moduli_{g,n} \times \prod_{j=1}^{s-2} \Moduli_{0,\{\diamond_{j-1},q_j, \bullet_j\}} \times \Moduli_{0,\{\diamond_{s-2},q_{s-1},q_{s}\}} \to \Moduli_{g,n+s-1}
\end{align*}
which glues $p_i$ to $\diamond_0$ and $\bullet_j$ to $\diamond_j$ for $1\leq j \leq s-2$.

\begin{lemma}\label{lem:nomodulireduction}
Fix $m \geq 2$, and let $\Delta \subset \Moduli_{g,m+\ell}$ be a boundary stratum of codimension $k$. For $1\leq j \leq m$, let $T_j$ be an ordered set with $|T_j| \geq 2$. Suppose for for every $1 \leq j \leq m$ and every $S \subseteq T_j$ with $|S| = 2$, the class $\tau^{p_{j},S}_*[\Delta] \in \emph{Eff}^{k+1}(\Moduli_{g,m+\ell+1})$ is rigid and extremal. Set $K = k+|T_1| + \cdots + |T_m| - m$ and $n = \ell + |T_1| + \cdots |T_m|$, and define 
\begin{align*}
\tau := \tau^{p_{1},T_1} \circ \cdots \circ \tau^{p_{m},T_m}.
\end{align*}
Then the class $\tau_*[\Delta] \in \emph{Eff}^{K}(\Moduli_{g,n})$ is extremal.
\end{lemma}
\begin{proof}
For each $T_j$, and for each $P \subseteq T_j$, let $\varpi_P$ be the composition of forgetful morphisms which forgets all of the points in $T_j \backslash P$. Note that $|T_j| \geq 2$ for each $j$.

Fix a two-element subset $\{x_1,x_2\} \subseteq T_1$, and fix singletons $y_{j} \in T_j$ for $j = 2, \dots, m$. Define
\begin{align*}
\varpi := \varpi_{\{x_1,x_2\}} \circ \varpi_{\{y_2\}} \circ \varpi_{\{y_3\}} \circ \cdots \circ \varpi_{\{y_m\}},
\end{align*}
and let
\begin{align}\label{eq:nomodulieffdecomp}
\tau_*[\Delta] = \sum_s a_s[E_s] + \sum_t b_t[F_t] \in N^K(\Moduli_{g,n})
\end{align}
be an effective decomposition such that $\varpi_{*}[E_s] \neq 0$ for all $s$ and $\varpi_{*}[F_t] = 0$ for all $t$. Fix one of the $[E_s]$; by definition of $\varpi$, it must survive pushforward by $\pi_{q}$ for $q \in \left(\bigcup_{j=1}^m T_j \right) \backslash \{x_1,x_2,y_1,\dots,y_m\}$. Since $\varpi_{*}\tau_*[\Delta] = \tau^{p_1,\{x_1,x_2\}}_*[\Delta]$ is extremal in $\text{Eff}^{k+1}(\Moduli_{g,m+\ell+1})$ by assumption, the class $\varpi_{*}[E_s]$ is a positive multiple of $\tau^{p_1,\{x_1,x_2\}}_*[\Delta]$; therefore $[E_s]$ survives pushforward by $\pi_{x_1}$ and $\pi_{x_2}$ as well. Exchanging the roles of $T_1$ and $T_2$ implies that $[E_s]$ also survives pushforward by
\begin{align*}
\varpi' := \varpi_{\{y_2,z\}} \circ \varpi_{\{x_1\}} \circ \varpi_{\{y_3\}} \circ \cdots \circ \varpi_{\{y_m\}},
\end{align*}
where $z \in T_2\backslash\{y_2\}$. Again since $\varpi'_{*}[\Delta] = \tau^{p_2,\{y_2,z\}}_*[\Delta]$ is extremal in $\text{Eff}^{k+1}(\Moduli_{g,m+\ell+1})$, the class $\varpi'_{*}[E_s]$ is a positive multiple of $\tau^{p_2,\{y_2,z\}}_*[\Delta]$, and hence $[E_s]$ also survives pushforward by $\pi_{y_2}$. By iterating this argument, we see that $[E_s]$ survives pushforward by $\pi_q$ for all $q \in \bigcup_{j=1}^m T_j$.

Since each $\tau^{p_{j},\{x_1,x_2\}}_*[\Delta] \in \text{Eff}^{k+1}(\Moduli_{g,m+\ell+1})$ is rigid and extremal,
\begin{align*}
E_s \subseteq \bigcap_{\substack{j=1,\dots,m \\ x_1,x_2 \in T_j \text{ distinct} \\ y_k \in T_{k \neq j}}} \varpi_{\bar{x},\bar{y}}^{-1} \left(\tau^{p_{j},\{x_1,x_2\}}(\Delta)\right),
\end{align*}
where $\varpi_{\bar{x},\bar{y}} = \varpi_{\{x_1,x_2\}} \circ \varpi_{\{y_1\}} \circ \varpi_{\{y_2\}} \circ \cdots \circ \varpi_{\{y_{j-1}\}} \circ \varpi_{\{y_{j+1}\}} \circ \cdots \circ \varpi_{\{y_m\}}$. This means that the curve parametrized by the general point of $E_s$ must have a dual graph obtained from the dual graph of $\Delta$ by replacing the half edges $p_1,\dots,p_m$ by $m$ rational chains $C_1,\dots,C_m$, and for each $j$, the points in $T_j$ must be on the same chain.
If at least one of the vertices of some $C_j$ has valence greater than three, then $\varpi_{\{q\}*}[E_s] = 0$ for some $q \in T_j$, a contradiction. Thus the vertices in $C_1,\dots,C_m$ must all be trivalent. For any such distribution, $[E_s]$ is equivalent to $\tau_*[\Delta]$ via the WDVV relations.

After subtracting and rescaling, \eqref{eq:nomodulieffdecomp} becomes
\begin{align*}
\tau_*[\Delta] &= \sum_t \hat{b}_t[F_t] \in N^K(\Moduli_{g,n}).
\end{align*}
However, $\varpi_{*}\tau_*[\Delta] \neq 0 = \varpi_{*}[F_t]$, a contradiction; therefore, only trivial effective decompositions of $\tau_*[\Delta]$ exist, and $\tau_*[\Delta]$ is extremal.
\end{proof}

Corollary \ref{cor:allmoduli} shows extremality for rational tails strata which have moduli coming from all non-root vertices of their dual graphs; the following corollary does the same for rational tails strata which have moduli coming from none of the non-root vertices.

\begin{corollary}\label{cor:nomoduli}
Let $\Delta \subset \Moduli_{g,n}$ be a rational tails boundary stratum of codimension $k$ such that every non-root vertex of the dual graph of $\Delta$ is trivalent. Then $[\Delta]$ is extremal in $\emph{Eff}^k(\Moduli_{g,n})$.
\end{corollary}
\begin{proof}
If $\Delta = \Delta_{0;\{p_{i_1},p_{i_2}\}}$, then $[\Delta]$ is extremal by Proposition \ref{prop:divisorsextreme}. Otherwise, suppose the dual graph of $\Delta$ has $m\geq 2$ tails. For $1\leq j \leq m$, let $T_j$ be the labeled half edges in the $m$th tail. Then $\tau^{p_j,\{x_1,x_2\}}_*[\Moduli_{g,n}] = [\Delta_{0;\{x_1,x_2\}}]$ is rigid and extremal by Proposition \ref{prop:divisorsextreme}, and the hypotheses of Lemma \ref{lem:nomodulireduction} are satisfied.
\end{proof}

As another application, the following proposition generalizes the genus zero (arbitrary codimension) and genus one (codimension two) analogues from \cite[Theorem 6.1, 6.2, 7.2]{chencoskun2015}. First recall the definition of pinwheel strata from \cite{bcomega}: fix a partition $P_{1} \sqcup \cdots \sqcup P_r = \{p_1,\dots,p_n\}$. When $|P_i| = 1$  denote by $\bullet_i$ the element of the singleton $P_i$; for $|P_i|>1$, introduce new labels $\bullet_i$ and $\diamond_i$. The \emphbf{pinwheel stratum} associated to this partition is the image of the gluing morphism
\begin{align*}
\gamma: \Moduli_{g, \{\bullet_1, \ldots, \bullet_r\}} \times \prod_{|P_i|>1} \Moduli_{0, \{\diamond_i\}\cup P_i} \to \Moduli_{g,n}
\end{align*}
that glues together each $\bullet_i$ with $\diamond_i$.

\begin{proposition}\label{prop:pinwheel}
All pinwheel strata are extremal in $\Moduli_{g,n}$.
\end{proposition}
\begin{proof}
Induct on codimension: suppose all pinwheel strata of codimension $k$ are extremal in the relevant cones of their respective moduli spaces, and let $\Delta \subset \Moduli_{g,n}$ be a rational tails pinwheel stratum of codimension $k+1$. The base case is provided by the extremality of boundary divisors in Proposition \ref{prop:divisorsextreme}. If all external vertices of the dual graph of $\Delta$ are trivalent, then $[\Delta] \in \text{Eff}^{k+1}(\Moduli_{g,n})$ is extremal by Corollary \ref{cor:nomoduli}. Otherwise, fix one of the external vertices with valence at least four, with associated marked points $S \subset \{p_1,\dots,p_n\}$. Then $[\pi_{S\backslash\{q\}}(\Delta)]$ is extremal in $\text{Eff}^k(\Moduli_{g,n-|S|+1})$ by the induction hypothesis, so Lemma \ref{lem:addmodulitail}. implies that $[\Delta] \in \text{Eff}^{k+1}(\Moduli_{g,n})$ is extremal.
\end{proof}

\subsection*{Extensions}

Some of the above results may be pushed further in order to cover other boundary strata than those explicitly mentioned so far. In Lemma \ref{lem:addmodulitail}, the necessity of restricting to only gluing on rational tails comes from two sources: reduction morphisms to Hassett spaces only contract rational components, and the proof given for the injectivity of the pushforward of inclusion (Proposition \ref{prop:gammainjective}) does not have an analogue for strata not of rational tails type. There are partial responses to both of these concerns in isolated cases.

The role of Hassett spaces can be filled by moduli spaces of genus $g$ \emphbf{pseudostable} curves with $n$ ordered marked points $\Moduli_{g,n}^{ps}$, with the reduction morphisms replaced by the \emphbf{first divisorial contraction} of the log minimal model program $\varepsilon:\Moduli_{g,n} \to \Moduli_{g,n}^{ps}$ (see \cite{hassetthyeon,afs,afsv}), which contracts $\Delta_{1;\varnothing}$ by replacing an unmarked elliptic tail by a cusp. The injectivity of the pushforward of inclusion is known to hold for $\Delta_{1;\varnothing} \subset \Moduli_{g,n}$ for some small $g$ and $n$ (though not in a systematic way; for small genus instances, see \cite{faberchowrings1, fabercodim2, edidin1992}). Therefore, for certain boundary strata whose generic points parametrize curves with unmarked elliptic components, the setup in the proof of Lemma \ref{lem:addmodulitail} can be modified by replacing $\rho$ with $\varepsilon$.

Building on the work of \cite{fulgerlehmann2016}, some of our results may be extended to the \emphbf{pseudoeffective cone} $\overline{\text{Eff}}_d(\Moduli_{g,n})$, defined as the closure of the effective cone in the usual $\mathbb{R}^n$ topology on $N_d(\Moduli_{g,n})$. In particular, Lemma \ref{lem:maintool} may be extended to show rigidity and extremality in the pseudoeffective cone: as discussed in \cite[Remark 2.7]{chencoskun2015} and \cite[Example 4.17]{fulgerlehmann2016}, Proposition \ref{prop:indexbound} extends to the pseudoeffective cone, and the proof in Lemma \ref{lem:maintool} then carries through directly. The argument in Lemma \ref{lem:cartesian} also readily extends to the pseudoeffective cone, and thus the results which rely directly on these lemmas generalize to the pseudoeffective cone.

It is not known if the strategy of forgetting onto rigid classes carries through to the pseudoeffective setting, so results which rely on Lemma \ref{lem:nomodulireduction} cannot immediately be extended. In general there are a great many subtleties involved in moving extremality from the effective cone to the pseudoeffective cone, especially for higher codimension classes, and at present we are unable to extend the extremality of all of the strata considered here to the pseudoeffective cone without the presence of a morphism such as $\rho$ or $\varepsilon$. See \cite{fulgerlehmann2017} for further discussion and results on some of the challenges associated with moving from effective to pseudoeffective.

\bibliographystyle{amsalpha} 
\bibliography{bibliography.bib}

\end{document}